\title{On the density of nice Friedmans}
\author{Michael Brand\\
\texttt{michael.brand@alumni.weizmann.ac.il}}
\affil{Monash University School of IT\\
Clayton, VIC 3800\\
Australia}
\date{\today}
\renewcommand{\span}[1]{\mathop{\mathrm{span}}\!\left(#1\right)}
\newcommand{\spant}[1]{\mathop{\mathrm{span'}}\!\left(#1\right)}
\newcommand{\tuplespant}[1]{\mathop{\mathrm{tuplespan'}}\!\left(#1\right)}
\newtheorem{thm}{Theorem}
\newtheorem{lemma}{Lemma}
\begin{document}
\maketitle
\begin{abstract}
A Friedman number is a positive integer which is the result of an
expression combining all of its own digits by use of the four basic operations,
exponentiation and digit concatenation. A ``nice'' Friedman number is a
Friedman number for which the expression constructing the number from its
own digits can be represented with the original order of the digits
unchanged. One of the fundamental questions regarding Friedman numbers, and
particularly regarding nice Friedman numbers,
is how common they are among the integers. In this
paper, we prove that nice Friedman numbers have density 1,
when considered in binary, ternary or base four.
\end{abstract}

\section{Introduction}
Friedman numbers \cite{A036057:FriedmanNumbers, Friedman:PuzzlePage}
are numbers that can be computed from their own digits, each
digit used exactly once, by use
of the four basic arithmetic operations, exponentiation and digit concatenation
(as long as digit concatenation is not the only operation used).
Parentheses can be used at will.
An example of a Friedman number is $25$, which can be represented as $5^2$.
An example of a non-Friedman number is any power of $10$, because no power
of $10$ can be expressed as the result of a computation using only arithmetic
operations and exponentiation if the initial arguments in the computation are
a smaller power of $10$ and several zeros.

Several interesting subsets of Friedman numbers have been defined since
the introduction of Friedman numbers. For example,
there are several conflicting definitions in the literature
(see, e.g., \cite{OEIS:vampire1} and \cite{OEIS:vampire2}) for the term
``vampire numbers'', initially introduced by Clifford A. Pickover
\cite{Pickover:vampire}. By one definition, these are Friedman numbers that
make no use of exponentiation.

Another interesting subset,
introduced by Mike Reid, is the ``nice'' Friedman numbers (sometimes referred
to as ``orderly'' or ``good'' Friedman numbers)
\cite{A080035:NiceFriedmanNumbers}. These are the Friedman numbers that can
be calculated from their own digits without changing the digit order.
So, for example, $25=5^2$ is a Friedman number, but it
is not a nice Friedman number. On the other hand, $127=-1+2^7$ is nice.

We wish to answer the following question:
if $F(n)$ is the number of nice Friedman numbers in the range
$[1,n]$, what is $\lim_{n \to \infty} F(n)/n$?

This question, when asked about Friedman numbers in general, was answered in
\cite{Brand:Friedman}. Though the tools given there are not immediately
applicable to the nice Friedman number scenario, we show that they can be
adapted for it. The adaptation proves that the density of nice Friedman numbers
is $1$ for binary, ternary and quaternary nice Friedman numbers. Unfortunately,
it is not strong enough to answer the general case. Specifically, the
interesting case of decimal nice Friedman numbers remains open.

\section{Adapting the technique}

We utilize here the ``infix'' technique introduced in \cite{Brand:Friedman}
and adapt it. (For this, we continue to use terminology that was introduced and
defined there. The reader may want to refer to that other paper for the
definitions.)

The ``infix'' method of \cite{Brand:Friedman} cannot be
used as-is for vampire numbers or for nice Friedman numbers.
For vampire numbers, the infix method
fails because infixes explicitly make use of exponentiation. For
nice Friedman numbers, infixes are constructable using the same method
as shown in \cite{Brand:Friedman}. However, bounding the density based on
the infix method relies on the
following lemma:

\begin{lemma}[\cite{Brand:Friedman}]\label{L:g}
There exists a value of $s$, for which $g=N_b(s)^\frac{s-3}{s}>b^{L(s)}$.
\end{lemma}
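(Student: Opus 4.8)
The plan is to take logarithms and reduce the claimed inequality to a comparison of two growth rates in $s$, after which a single sufficiently large $s$ can be exhibited. Writing the target $g = N_b(s)^{(s-3)/s} > b^{L(s)}$ in logarithmic form (base $b$), it is equivalent to
\[
\frac{s-3}{s}\,\log_b N_b(s) \;>\; L(s).
\]
Since the factor $(s-3)/s$ tends to $1$ as $s \to \infty$, the inequality will hold for all large $s$ provided the leading behaviour of $\log_b N_b(s)$ strictly dominates that of $L(s)$; the subtracted constant $3$ merely reflects the digits reserved by the infix framing and contributes a correction that vanishes in the limit. The problem therefore splits into a lower bound on the infix count $N_b(s)$ and a matching upper bound on the length $L(s)$.

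First I would establish a lower bound of the form $N_b(s) \geq \gamma_b^{\,s}$ for a constant $\gamma_b > 1$ and all sufficiently large $s$. Using the infix construction inherited from \cite{Brand:Friedman}, one counts the values obtainable from $s$ base-$b$ digits: the freedom in the choice of digits, the placement of the arithmetic and exponentiation operations, and the tree structure of the expression together yield exponentially many candidate values in $s$. The delicate point is distinctness, since different expressions may evaluate to the same integer and the raw expression count overstates $N_b(s)$. I would therefore isolate a family of expressions guaranteed to produce pairwise distinct values — for instance by fixing a canonical skeleton and varying only digit positions that provably alter the output — giving a clean bound $\log_b N_b(s) \geq c_b\, s - O(1)$ with an explicit $c_b > 0$.

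Second I would bound $L(s)$ from above: here $L(s)$ is the number of base-$b$ digits consumed by the infix values, and the controlling fact is that the construction caps the magnitude of those values, so that $L(s) \leq \beta_b\, s + o(s)$ for an explicit constant $\beta_b$. Combining the two estimates, the reduced inequality becomes $\tfrac{s-3}{s}\bigl(c_b s - O(1)\bigr) > \beta_b s + o(s)$, which holds for all large $s$ as soon as $c_b > \beta_b$; the factor $(s-3)/s$ and the lower-order terms are then absorbed by taking $s$ large, and existence of a suitable $s$ follows.

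The main obstacle I anticipate is verifying the strict separation $c_b > \beta_b$ of the two rates for each of the bases $b \in \{2,3,4\}$, since this is precisely where the argument is tight and where the restriction to small bases enters. The entropy per digit of the distinct infix values must outpace their length per digit, and for larger bases this balance tips the wrong way — which is exactly why the decimal case resists the method. Establishing $c_b > \beta_b$ will thus rest on the careful, base-dependent count of \emph{distinct} infix values described above rather than on any soft asymptotic estimate.
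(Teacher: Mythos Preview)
The lemma you are attempting is not proved in this paper at all: it is quoted, with attribution, from \cite{Brand:Friedman}, and it concerns the unrestricted span count $N_b(s)$ for general Friedman numbers, where it holds in \emph{every} base $b$. The present paper only invokes it to recall why the infix method works in the general setting, then points out that its analogue with $N_b$ replaced by the order-preserving $N'$ can fail (Theorem~\ref{T:large_b}) and verifies that analogue for $b\le 4$ by explicit computation (Theorem~\ref{T:small_b}). Your closing paragraph, restricting to $b\in\{2,3,4\}$ and remarking that ``for larger bases this balance tips the wrong way,'' shows you have conflated Lemma~\ref{L:g} with its nice-Friedman counterpart; Lemma~\ref{L:g} itself carries no base restriction, and the decimal case is not an obstruction to it.

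Setting aside that misidentification, the proposal is a plan rather than a proof. The constants $c_b$ and $\beta_b$ are never produced, and the promised ``family of expressions guaranteed to produce pairwise distinct values'' is never exhibited, so the crucial inequality $c_b>\beta_b$ is asserted but not verified. Your reading of $L(s)$ as ``the number of base-$b$ digits consumed by the infix values'' is also not the paper's usage: $L(s)$ is simply the length of the digit string $s$, fixed once $s$ is chosen, not a quantity to be bounded through the construction. When the paper (in Theorem~\ref{T:small_b}) actually establishes the $N'$ analogue for small bases, it does so not by soft asymptotics but by tabulating radical-free values of $\spant{[d]^n}$ for small $n$ and showing that the associated recurrence polynomial has its largest real root exceeding $b$; any genuine proof here would need a comparably concrete count.
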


This lemma is not true for every base of representation, $b$, once
restricting $\span{s}$ to include only computations that do not change the
order of the digits of $s$.
Where such an $s$ can be found, the previous proof remains intact for
nice Friedman numbers, and shows that their density is $1$.

Let us define $\spant{s}$ similarly to $\span{s}$, but for $\spant{s}$
to include only those elements of the original span
that can be calculated without changing the digit order of $s$, and let us
also define $\tuplespant{\cdot}$ and $N'(\cdot)$ accordingly.

The limitations of the infix method can be expressed as follows:

\begin{thm}\label{T:large_b}
In any base of representation $b \ge 28$,
every $s$ satisfies $b^{L(s)}>|\spant{s}|$.
\end{thm}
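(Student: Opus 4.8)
The plan is to bound $|\spant{s}|$ from above by the number of syntactically distinct order-preserving expressions that can be formed from the $L(s)$ digits of $s$, since distinct values require distinct expression trees and collisions (or undefined sub-expressions such as division by zero) only decrease the count. Every such expression is obtained in two stages: first the $L(s)$ digits are grouped, by concatenation, into $m$ consecutive nonempty blocks, and then the $m$ blocks---whose left-to-right order is fixed---are combined by a binary expression tree whose internal nodes carry one of the operations $+,-,\times,\div$ or exponentiation. The first stage contributes $\binom{L(s)-1}{m-1}$ choices, the shape of the tree contributes the Catalan number $C_{m-1}$, and the operation labels contribute $5^{m-1}$.

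Summing over $m$ gives, for the sign-free expressions,
\begin{equation*}
\sum_{m=1}^{L(s)} \binom{L(s)-1}{m-1}\, C_{m-1}\, 5^{m-1} \;\le\; \sum_{j=0}^{L(s)-1}\binom{L(s)-1}{j}\, 20^{\,j} \;=\; 21^{\,L(s)-1},
\end{equation*}
where I have used the standard bound $C_{m-1}\le 4^{m-1}$ together with the binomial theorem. Incorporating unary minus (discussed below) raises the base of this exponential, but the refined accounting keeps it at some explicit constant $c$; since $c<28\le b$ one obtains $|\spant{s}| \le c^{L(s)} < b^{L(s)}$ for every $s$, which is exactly the claimed inequality. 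The finitely many very short digit strings, if the bound is not yet decisive there, are checked directly.

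I expect the main obstacle to be the treatment of unary minus, which the definition of nice Friedman numbers permits (as in $127=-1+2^7$). Negation commutes harmlessly with $+,-,\times,\div$ and can be pushed down to the leaves, but a negation sitting on a base or, especially, on an \emph{exponent} of an exponentiation produces genuinely new values---for instance $2^{-4}=\frac{1}{16}$ is neither a sign-free value nor the negation of one. A naive accounting that allows an independent sign at every node inflates the per-block constant well past $28$, so the crux is a structural argument showing that it suffices to allow negations only on leaves and on exponentiation nodes, which captures every reachable value while keeping $c$ below $28$. Once this bookkeeping is pinned down, the summation above is routine and the theorem follows.
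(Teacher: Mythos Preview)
Your overall strategy matches the paper's: bound $|\spant{s}|$ by the number of syntactically distinct order-preserving expressions on the $l=L(s)$ digits and show this count is at most $c^{\,l}$ for some constant $c\le 28$. Your block-then-tree decomposition is a minor variant of what the paper does; the paper instead treats concatenation as one more inter-digit ``operation'' and counts full binary trees on all $l$ digits directly, obtaining $7^{\,l-1}C_{l-1}<28^{\,l}$ once the negation issue is settled.

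The genuine gap is precisely where you flag it. You assert that restricting unary minus to ``leaves and exponentiation nodes'' keeps $c<28$, but you do not carry out the count, and the natural reading of that scheme fails. An independent sign on each of the $m$ leaves contributes $2^{m}$, and a sign attached to each exponentiation node is tantamount to giving internal nodes six labels instead of five; your sum then becomes
\[
\sum_{m=1}^{l}\binom{l-1}{m-1}\,2^{m}\,C_{m-1}\,6^{m-1}\ \le\ 2\sum_{j=0}^{l-1}\binom{l-1}{j}48^{\,j}\ =\ 2\cdot 49^{\,l-1},
\]
so $c=49$, not $c<28$. The paper's resolution is sharper and goes in the opposite direction: one pushes negations \emph{outward} (toward the root) through $+,-,\times,\div$ via identities such as $x\times(-y)\mapsto -(x\times y)$, and uses that $(-x)^{y}=\pm x^{y}$ whenever it is real. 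The only place a negation can get stuck is on an exponent, which is absorbed by introducing a single extra binary operation $x^{-y}$. After normalisation there remain six binary operations, concatenation, and at most one leading sign, giving $2\cdot 7^{\,l-1}C_{l-1}<28^{\,l}$. Feeding the same normalisation into your block decomposition would in fact yield the tighter $2\cdot 25^{\,l-1}$, but the missing idea is this push-to-the-root reduction of unary minus, not the subsequent combinatorics. Your remark about checking short strings separately is unnecessary once $c<b$, since $c^{\,l}<b^{\,l}$ holds for every $l\ge 1$.
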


This means that even though infixes can be constructed for these bases,
there are too few of them to establish a density of $1$ by the same means
as in \cite{Brand:Friedman}.

\begin{proof}
$\spant{s}$ is the set of numbers that can be produced by a calculation from
the digits of $s$ while retaining the order of $s$'s digits.
Let us consider, equivalently, the set of strings that
describe these calculations. Each character in the strings is
a base-$b$ digit, one of 5 permitted operations or an opening/closing
parenthesis.

In addition to the 5 binary operations we must also consider unary negation.
This adds a sixth operation: $x^{-y}$. In all other operations, when used in
conjunction with unary negation, we can eliminate the unary operation by
pushing it forward in the string until it is voided or until it reaches the
beginning of the string, via operations such as
``$x \times -y$'' $\mapsto$ ``$-(x \times y)$''. After applying these normalization
procedures all inter-digit operations are one of the six binary operations
described, and if there remains a unary negation it must be at the very
beginning of the string.

After the normalization,
any two consecutive digits can have at most one operation
between them, for a total of $6+1=7$ possibilities, and the number of possible
arrangements to surround $l=L(s)$ digits with pairs of parentheses is
the Catalan number $C(l-1)=\binom{2l-2}{l-1}/l$, which is smaller than
$4^l$. The total number of possibilities for such strings is therefore
less than $(7 \times 4)^l$.
\end{proof}

On the other hand, for nice Friedman numbers it is still possible to use
the infix method for small enough values of $b$.

\begin{thm}\label{T:small_b}
In binary, ternary and base four the density of nice Friedman numbers is $1$.
\end{thm}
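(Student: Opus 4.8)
The plan is to verify, for each of the three bases $b \in \{2,3,4\}$, the order-preserving analogue of Lemma~\ref{L:g}: that there exists a value of $s$ for which $N'(s)^{\frac{s-3}{s}} > b^{L(s)}$, where $N'(s)$ counts the distinct values in $\tuplespant{s}$. As remarked immediately after Lemma~\ref{L:g}, wherever such an $s$ exists the density argument of \cite{Brand:Friedman} transfers intact to the nice-Friedman setting and delivers density $1$. The theorem therefore reduces entirely to exhibiting, in each of the three bases, a single witnessing value $s$.

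First I would trace the density proof of \cite{Brand:Friedman} and confirm that its only base-dependent ingredient is the inequality of Lemma~\ref{L:g}. Every construction in the infix method---concatenating blocks of digits and inserting a fixed infix between consecutive digits---preserves the left-to-right order of the digits, so each step remains valid after replacing $\span{s}$ by $\spant{s}$ and $N_b$ by $N'$; the remainder of the argument is purely combinatorial and indifferent to the order-preserving restriction. This isolates the problem to the single inequality above.

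Next I would, for each $b \in \{2,3,4\}$, produce an explicit $s$ together with a lower bound on $N'(s)$ large enough to force $N'(s)^{\frac{s-3}{s}} > b^{L(s)}$. It is worth noting that Theorem~\ref{T:large_b} is itself exactly this inequality read in the opposite direction: since the proof there bounds the number of order-preserving expressions on $L(s)$ digits by less than $28^{L(s)}$, and since $\frac{s-3}{s}<1$, one has $N'(s)^{\frac{s-3}{s}} < 28^{L(s)} \le b^{L(s)}$ whenever $b \ge 28$, so no $s$ can ever satisfy the inequality in those bases. For the small bases $b = 2,3,4$, however, this ceiling of $28^{L(s)}$ leaves ample room above $b^{L(s)}$, and the task becomes confirming that $N'(s)$ actually climbs high enough for a concrete $s$. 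For each base I would either exhibit a family of syntactically valid, pairwise value-distinct order-preserving expressions on the digits of the chosen $s$, or enumerate $\spant{s}$ directly for a modestly sized $s$, and then check the inequality numerically.

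The principal obstacle lies in this last step. The order-preserving constraint discards precisely those rearrangement-based expressions that inflate $N_b(s)$ in the unrestricted case, so a careless choice of $s$ will fall short; the witness must be engineered so that its digit sequence admits many distinct order-preserving evaluations, which pushes $L(s)$ upward and makes the count $N'(s)$ correspondingly harder to pin down. Establishing a sufficient lower bound on $N'(s)$ will therefore rely on either a carefully designed explicit family of distinct-valued order-preserving expressions or a finite computer-assisted enumeration of $\spant{s}$ in each of the three bases. Once the inequality is confirmed in each case, density $1$ follows at once from the adapted \cite{Brand:Friedman} argument, completing the proof.
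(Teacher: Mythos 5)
Your reduction is the right one, and it is the same framing the paper uses: the remark after Lemma~\ref{L:g} already tells you that the whole theorem collapses to exhibiting, in each base $b\in\{2,3,4\}$, a witness $s$ with (essentially) $N'(s)>b^{L(s)}$. But that witness \emph{is} the theorem, and your proposal stops exactly where the work begins: you offer ``either an explicit family or a finite enumeration of $\spant{s}$ for a modestly sized $s$'' without producing either, and without any argument that a modestly sized $s$ suffices. This is a genuine gap, because direct enumeration is only known to close the binary case (the paper's Table~\ref{Table:binary} reaches $N'([1_{BIN}]^7)=170>2^7$); for base $4$ you would need an $s$ whose order-preserving span outgrows $4^{L(s)}$, and nothing in your write-up bounds how large such an $s$ must be or shows the enumeration is feasible.

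The missing idea is the paper's ``finer tool'': take $s=[b-1]^n$, let $M(n)$ be the set of radical-free tuples in $\tuplespant{[b-1]^n}$ not already obtainable from $[b-1]^{n-1}$, and observe that (i) $M(n)$ is a prefix code, so $N'([b-1]^n)\ge|M(n)|$, and (ii) appending a member of $M(m)$ to a member of $M(n)$ lands in $M(n+m)$. A short computed table of the first few counts $c_1,\dots,c_k$ then yields the recurrence $|M(n)|\ge\sum_{i\le k}c_i\,|M(n-i)|$, so $|M(n)|\in\Omega(g^n)$ for $g$ the largest real root of $x^k-\sum_i c_i x^{k-i}$; verifying $P(b)<0$ (e.g.\ $P(3)=-175$ for ternary with counts $1,1,4,22,98,454$, and $P(4)=-740$ for base four) forces $g>b$ and hence $N'([b-1]^n)>b^n$ for large $n$. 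This device is what converts a small, finite computation into the asymptotic inequality you need; without it, or without an actual exhibited family of pairwise-distinct order-preserving expressions beating $b^{L(s)}$, your proof does not establish the ternary or quaternary cases. (You also omit the radical-free restriction, which is what makes the prefix-code/unique-decodability step from \cite{Brand:Friedman} go through.)
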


\begin{proof}
Let us consider $b=2$. Table~\ref{Table:binary} lists the radical-free integers
and the tuple-sizes of such that can be produced from a string of the
form $[1_{BINARY}]^n$. The left-most column gives $n$, followed by the number
$[1_{BINARY}]^n$. The next two columns gives the radical-free integers that
can be produced from this string but not from any smaller $n$, as well as
their total number. Lastly, we give $N'([1_{BINARY}]^n)$.

\begin{table}[H]
\begin{center}
\begin{tabular}{|c|c|c|c|c|}
  \hline
  $n$ & Number & radical-free & total & $N'$ \\
  \hline \hline
  $1$ & $1_{BIN}$ & $\{\}$ & $0$ & $0$ \\
  $2$ & $11_{BIN}$ & $\{2,3\}$ & $2$ & $2$ \\
  $3$ & $111_{BIN}$ & $\{7\}$ & $1$ & $3$ \\
  $4$ & $1111_{BIN}$ & $\{5,6,15\}$ & $3$ & $8$ \\
  $5$ & $11111_{BIN}$ & $\{10,12,14,21,26,28,31\}$ & $7$ & $18$ \\
  $6$ & $111111_{BIN}$ & $\{\ldots\}$ & $23$ & $55$ \\
  $7$ & $1111111_{BIN}$ & $\{\ldots\}$ & $80$ & $170$ \\
  \hline
\end{tabular}
\end{center}
\caption{Radical-free numbers formable from $[1_{BIN}]^n$}
\label{Table:binary}
\end{table}

As can be seen,
the maximal-sized prefix code composed of tuples of
radical-free numbers that can be calculated
from $[1_{BIN}]^7$ is already more than $2^7$, giving an
example of an $s$ for which $N'(s)>b^{L(s)}$ and proving
that the density of binary nice Friedman numbers is $1$.
The prefix code, in this case, is composed of all tuples that can be
calculated from $[1_{BIN}]^7$ but not from $[1_{BIN}]^5$.

To prove for ternary and quaternary numbers, we require a finer tool.
To explain it, we re-prove the claim regarding the density of binary
nice Friedman numbers without making use of the last two rows of
Table~\ref{Table:binary}.

If $X$ is a set of integer tuples, let $\overline{X}$ be the subset of $X$
containing only the tuples composed solely of radical-free integers.

Let
$M(n)$ be $\overline{\tuplespant{[1_{BIN}]^n}} \setminus \overline{\tuplespant{[1_{BIN}]^{n-1}}}$.

Clearly, $N'([1]^n) \ge |M(n)|$, because $M(n)$ is a prefix code.
Furthermore, if $x \in M(n)$ and $y \in M(m)$ then the tuple that is $x$
appended to $y$ is in $M(n+m)$.

Consider, again, the values of the first five rows of Table~\ref{Table:binary}.
The number of radical-free numbers found
for $n=1,\ldots,5$ is $0,2,1,3,7$, respectively.
For any $n>5$, any member of $\overline{\tuplespant{[1]^{n-i}}}$ with $i \le 5$
can be extended by a suffix composed of any member of $\overline{\spant{[1]^i}}$
to make a unique member of $M(n)$.
This indicates that for any $n>5$,
$$N'([1_{BIN}]^n) \ge M(n) \ge 2M(n-2)+M(n-3)+3M(n-4)+7M(n-5).$$

The rate of increase of $N'([1]^n)$ must therefore be in $\Omega(g^n)$, where
$g$ is the greatest real solution to $P(x)=x^5-2x^3-x^2-3x-7$. However,
substituting $x=2$ we get that $P(x)=-1<0$, so the greatest real solution
to $P(x)$ must be greater than $b=2$. If so, then for a sufficiently large
$n$, $N'([1_{BIN}]^n)>b^n$, and using $s=[1_{BIN}]^n$ we can prove that
the density of binary nice Friedman numbers is 1.

This principle can also be applied for ternary and quaternary. In base three
we use $[2_{THREE}]^n$ to get the following number of radical-free numbers in
a partial count of $n=1,\ldots,6$: $1,1,4,22,98,454$. The corresponding
polynomial $P(x)=x^6-x^5-x^4-4x^3-22x^2-98x-454$ has $P(3)=-175<0$ and for
$[3_{FOUR}]^n$ the corresponding polynomial is
$P(x)=x^6-x^5-3x^4-13x^3-59x^2-369x-2279$ with $P(4)=-740<0$. In both cases,
the polynomials have a solution greater than $b$, the base of representation,
proving the claim.
\end{proof}

There is a gap between Theorem~\ref{T:large_b} and Theorem~\ref{T:small_b},
consisting of the bases between $5$ and $27$ inclusive, that still
needs to be fully addressed, but note that even for bases over $27$, the
claims do not immediately imply that the density of nice Friedman numbers is
not $1$.

\bibliographystyle{plain}
\bibliography{bibFriedman}
\end{document}